\numberwithin{equation}{section}
\newcommand{\e}{{\,{\rm e}}}
\newcommand{\var}{{\,{\rm Var}}}
\newcommand{\PP}{\ensuremath{\mathbb{P}}}
\newcommand{\EE}{\ensuremath{\mathbb{E}}}
\newcommand{\N}{\ensuremath{\mathbb{N}}}
\newcommand{\bZ}{\ensuremath{\mathbb{Z}}}
\newcommand{\cX}{\ensuremath{\mathcal{X}}}
\newcommand{\cD}{\ensuremath{\mathcal{D}}}
\newcommand{\cE}{\ensuremath{\mathcal{E}}}
\newcommand{\cG}{\ensuremath{\mathcal{G}}}
\newcommand{\cA}{\ensuremath{\mathcal{A}}}
\newcommand{\cB}{\ensuremath{\mathcal{B}}}
\newcommand{\bD}{\ensuremath{\mathbf{D}}}
\newcommand{\R}{\ensuremath{\mathbb{R}}}
\newcommand{\Z}{\ensuremath{Z}}
\newcommand{\X}{\ensuremath{X}}
\newcommand{\Y}{\ensuremath{Y}}
\newcommand{\W}{\ensuremath{\mathbf{W}}}
\def\ind{{\mathbbm{1}}}
\newcommand{\tmix}{\ensuremath{t_{\mathrm{mix}}}}
\newcommand{\given}{\, \big| \,}
\newcommand{\ti}{\widetilde}
\newcommand{\hx}{\ensuremath{\bar{x}}}
\newtheorem{lemma}{Lemma}
\newtheorem{theorem}{Theorem}
\newtheorem{proposition}[lemma]{Proposition}
\begin{document}

\title[]{Cutoff for a stratified random walk \\
on the hypercube}
\author[A. Ben-Hamou]{Anna Ben-Hamou}
 \address{A. Ben-Hamou\hfill\break
Sorbonne Universit\'e, LPSM\\
4, place Jussieu\\
75005 Paris, France.}
\email{anna.ben-hamou@upmc.fr}

\author[Y. Peres]{Yuval Peres}
\address{Y.\ Peres\hfill\break
Microsoft Research\\ One Microsoft Way\\ Redmond, WA 98052, USA.}
\email{peres@microsoft.com}

\keywords{Markov chains ; mixing times ; cutoff ; hypercube}
\subjclass[2010]{60J10}

\begin{abstract}
We consider the random walk on the hypercube which moves by picking an ordered pair $(i,j)$ of distinct coordinates uniformly at random and adding the bit at location $i$ to the bit at location $j$, modulo $2$. We show that this Markov chain has cutoff at time $\frac{3}{2}n\log n$ with window of size $n$, solving a question posed by Chung and Graham (1997). 
\end{abstract}

\maketitle

\section{Introduction}

\subsection*{Setting and main result}

Let $\cX=\{0,1\}^n\backslash \{\mathbf{0}\}$ and consider the Markov chain $\{\Z_t\}_{t\geq 0}$ on $\cX$ defined as follows: if the current state is $x$ and if $x(i)$ denotes the bit at the $i^{\text{th}}$ coordinate of $x$, then the walk proceeds by choosing uniformly at random an ordered pair $(i,j)$ of distinct coordinates, and replacing $x(j)$ by $x(j)+x(i)$ (mod~$2$). 

The transition matrix $P$ of this chain is symmetric, irreducible and aperiodic. Its stationary distribution $\pi$ is the uniform distribution over $\cX$, i.e. for all $x\in\cX$, $\pi(x)=\frac{1}{2^n -1}$. We are interested in the total-variation mixing time, defined as
\begin{eqnarray*}
\tmix(\varepsilon)&=& \min\left\{t\geq 0,\, d(t)\leq \varepsilon\right\}\, ,
\end{eqnarray*}
where $\displaystyle{d(t)=\max_{x\in\cX}d_x(t)}$ and $d_x(t)$ is the total-variation distance between $P^t(x,\cdot)$ and $\pi$:
\begin{eqnarray*}
d_x(t)&=&\sup_{A\subset\cX} \left(\pi(A)-P^t(x,A)\right)\;=\;\sum_{y\in\cX}\left(P^t(x,y)-\pi(y)\right)_+\, .
\end{eqnarray*}

\citet*{diaconis1996walks} showed that the log-Sobolev constant of $\{\Z_t\}_{t\geq 0}$ is $O(n^2)$, which yields an upper-bound of order $n^2\log n$ on the $\ell_2$-mixing time. They however conjectured that the right order for the total-variation mixing was $n\log n$. \citet*{chung1997stratified} confirmed this conjecture. They showed that the relaxation time of $\{\Z_t\}$ was of order $n$ (which yields a tight upper-bound of order $n^2$ for $\ell_2$-mixing) and that the total-variation mixing time $\tmix(\varepsilon)$ was smaller than $c_\varepsilon n\log n$ for some constant $c_\varepsilon$. They asked whether one could make this bound more precise and replace $c_\varepsilon$ by a universal constant which would not depend on $\varepsilon$. We answer this question positively by proving that the chain $\{\Z_t\}$ has cutoff at time $\frac{3}{2}n\log n$, with window of order $n$.

\begin{theorem}\label{thm:cutoff_p=2}
The chain $\{\Z_t\}$ has total-variation cutoff at time $\frac{3}{2}n\log n$ with window $n$,
\begin{eqnarray*}
\lim_{\alpha\to +\infty}\liminf_{n\to +\infty} d\left(\frac{3}{2}n\log n-\alpha n\right)&=&1\, ,
\end{eqnarray*}
and
\begin{eqnarray*}
\lim_{\alpha\to +\infty}\limsup_{n\to +\infty} d\left(\frac{3}{2}n\log n+\alpha n\right)&=&0\, .
\end{eqnarray*}
\end{theorem}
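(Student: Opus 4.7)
The central observation is that the Hamming weight $W_t := |Z_t|$ is itself a Markov chain: a birth-and-death chain on $\{1,\ldots,n\}$ with up-probability $p_w = w(n-w)/[n(n-1)]$, down-probability $q_w = w(w-1)/[n(n-1)]$, and holding probability $1 - w/n$. Its drift $\mathbb{E}[W_{t+1} - W_t \mid W_t = w] = w(n+1-2w)/[n(n-1)]$ is a concave function of $w$, and the associated logistic ODE $\dot w = w(n+1-2w)/[n(n-1)]$ with $\bar w(0) = 1$ integrates explicitly to $\bar w(t) = (n+1)/\bigl(2 + (n-1)e^{-rt}\bigr)$, where $r := (n+1)/[n(n-1)] \sim 1/n$. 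Expanding at $t = \tfrac{3}{2} n \log n - \alpha n$ gives $\bar w(t) = n/2 - e^\alpha \sqrt n/4 + o(\sqrt n)$, so the time scale $\tfrac{3}{2} n \log n$ is precisely what the weight needs for its deterministic approximation to reach within $O(\sqrt n)$ of $n/2$.

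\textbf{Lower bound.} I will use $W_t$ as the distinguishing statistic, starting from the worst-case $Z_0 = e_1$. By Jensen applied to the concave drift, $m(t) := \mathbb{E}_{e_1}[W_t]$ is dominated by $\bar w(t)$, so $m(t_\alpha) \leq n/2 - c\,e^\alpha \sqrt n$ at $t_\alpha := \tfrac{3}{2} n \log n - \alpha n$. For the variance, the one-step conditional variance satisfies $\mathrm{Var}(W_{t+1} - W_t \mid W_t = w) \leq w/n$; combining this with the contraction $\mathbb{E}[W_{t+1} \mid W_t] - n/2 \approx (1 - 1/n)(W_t - n/2)$ in the balanced regime yields $\mathrm{Var}_{e_1}(W_t) = O(n)$ uniformly for $t \geq n \log n$. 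Chebyshev then gives $W_{t_\alpha} \leq n/2 - (c/2) e^\alpha \sqrt n$ with probability $1 - O(e^{-2\alpha})$, while under $\pi$ the weight is essentially $\mathrm{Bin}(n,1/2)$ with standard deviation $\sqrt n/2$; taking $\alpha$ large separates the two distributions.

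\textbf{Upper bound.} The naive synchronous coupling is inadequate: if both chains use the same ordered pair $(i,j)$, the difference $D_t := Z_t \oplus Z'_t$ satisfies $D_{t+1} = D_t \oplus D_t(i) e_j$, so it evolves as a copy of the original chain on $\cX$ and never vanishes. I would therefore use a two-phase argument. In Phase 1, apply the same ODE/variance analysis in the opposite direction: by $t_+ := \tfrac{3}{2} n \log n + \alpha n$, the weight $W_{t_+}$ lies in the balanced window $n/2 \pm C\sqrt n$ with probability $1 - o_\alpha(1)$. In Phase 2, starting from any balanced weight, show that the chain mixes within the residual $O(n)$ window, either via an $\ell_2$ bound exploiting the $S_n$-symmetry to reduce to within-shell mixing, or via a modified coupling that uses the abundance of $1$'s in a balanced state to provide effective updates.

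\textbf{Main obstacle.} The hardest step will be Phase 2 of the upper bound: proving that, once the weight is balanced, the conditional distribution on each weight-shell is already close to uniform. The characters $x \mapsto (-1)^{\langle v, x \rangle}$ are not eigenfunctions of $P$ (because $P$ does not commute with $\mathbb{F}_2^n$-translations), so there is no direct Diaconis--Shahshahani character bound; and the natural synchronous coupling cannot coalesce. Either an $\ell_2$ decomposition respecting the $S_n$-symmetry, or a carefully designed strong stationary time that marks each coordinate as "refreshed" after a successful update (one in which the second-picked coordinate was adjacent to a $1$-coordinate carrying sufficient randomness), will be needed to close the $O(n)$ window around $\tfrac{3}{2} n \log n$.
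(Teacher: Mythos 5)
Your heuristics are on target (the weight chain, the logistic ODE, the $\tfrac32 n\log n$ time scale, and the correct observation that the synchronous coupling makes the XOR-difference evolve as another copy of the chain and never coalesce), but the proposal has two genuine gaps, and the second one concerns the actual heart of the theorem. For the upper bound, your phase split cannot work as scoped. If ``starting from any balanced weight'' means a worst-case state of weight about $n/2$ (say, all ones on the first $n/2$ coordinates), then its conditional distribution on the shell needs $(\tfrac12-o(1))\,n\log n$ further steps, not $O(n)$, to approach uniformity: the statistic counting ones among the first $n/2$ coordinates starts at $n/2$, has equilibrium $n/4+O(\sqrt n)$, contracts at rate only $\approx 1/n$ per step with $O(1)$ conditional variance, so it remains $\omega(\sqrt n)$ from equilibrium for order $n\log n$ time. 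Hence no ``residual $O(n)$ window'' argument exists; the pattern randomization must be shown to run \emph{concurrently} with the weight's approach to $n/2$. The paper splits the budget as $(n\log n+\alpha n)+(\tfrac12 n\log n+\alpha n)$: first hit weight $n/3$ from any start, then prove that from any $x$ with $H(x)\ge n/3$ the chain mixes in $\tfrac12 n\log n+\alpha n$. The missing device is the reduction, by exchangeability within the two coordinate blocks determined by the starting state $x$, to the two-dimensional chain $(X_t,Y_t)$ counting ones inside and outside the support of $x$ (together with a triangle inequality through the uniform distribution on the shell of $x$, which reduces the second term to mixing of the weight chain), followed by a coupling of two such chains of equal Hamming weight obtained by matching ones with ones and zeros with zeros through a random permutation. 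The difference $X_t-\tilde X_t$ then has one-step drift $-(X_t-\tilde X_t)(2H_t-1)/n^2$, becomes a supermartingale after an exponential compensator, the accumulated contraction over $\tfrac12 n\log n$ steps is about $n^{-1/2}$ (using that $\sum_s H_s$ is close to $\tfrac{n^2\log n}{4}$, controlled by the same drift estimates as for the weight chain), and an unbiased-walk hitting-time estimate (Proposition 17.20 of Levin--Peres--Wilmer) kills the remaining $O(\sqrt n)$ discrepancy in the extra $\alpha n$ steps. Neither of your two suggested routes ($\ell_2$ with $S_n$-symmetry, or a strong stationary time marking ``refreshed'' coordinates) is carried out, and both would have to overcome exactly this concurrency issue.

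The lower bound also contains an incorrect step: the claim $\mathrm{Var}_{e_1}(W_t)=O(n)$ uniformly for $t\ge n\log n$ is false. In the early phase the chain is essentially a Yule-type pure birth process (already the first step is Geometric$(1/n)$), so the initial randomness acts as a time shift with fluctuations of order $n$; at $t_\alpha=\tfrac32 n\log n-\alpha n$ the local speed is of order $e^\alpha/\sqrt n$, so this shift produces position fluctuations of order $e^\alpha\sqrt n$, i.e.\ $\mathrm{Var}(W_{t_\alpha})\asymp e^{2\alpha}n$. Chebyshev around the mean then yields only a constant bound, not $1-O(e^{-2\alpha})$, and the distinguishing argument as written does not separate the distributions as $\alpha\to\infty$. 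The fix is the paper's: apply Chebyshev to hitting times rather than to the position at a fixed time --- $T_{n/3}$ stochastically dominates a sum of independent geometrics with variance $O(n^2)$, and from weight $n/3$ the time to reach $n/2-\alpha\sqrt n$ dominates the corresponding hitting time for the weight chain of the lazy hypercube walk; both have $O(n)$ windows, which the $\alpha n$ slack absorbs.
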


\subsection*{Motivation and related work}

The chain $\{\Z_t\}$ records the evolution of a single column in the following random walk on $\text{SL}_n(\bZ_2)$, the group of invertible $n\times n$ matrices with coefficients in $\bZ_2$: at each step, the walk moves by picking an ordered pair of distinct rows uniformly at random and adding the first one to the other, modulo $2$. This matrix random walk has received significant attention, both from group theoreticians and cryptologists. It was brought to our attention by Ron Rivest, who was mostly interested in computational mixing aspects, pertaining to \emph{authentication protocols}. In cryptography, an authentication protocol is a scheme involving two parties, a \emph{verifier} and a \emph{prover}, the goal of the verifier being to certify the identity of the prover (\emph{i.e.} to distinguish between an honest and a dishonest prover). A large family of authentication protocols, called \emph{time-based authentication protocols}, is based on the time needed by the prover to answer a challenge. The authentication is successful if and only if the correct answer is provided fast enough. The following protocol was proposed by \citet{sotiraki2016authentication}. Starting from the identity matrix in $\text{SL}_n(\bZ_2)$, the prover runs the above Markov chain driven by random row additions up to a certain time $t\in\N$. He makes the final matrix $A_t$ public (this is called a \emph{public key}), but only he knows the trajectory of the Markov chain. Then, whenever he wants to authenticate, the prover asks the verifier for a vector $x\in\{0,1\}^n$. The challenge is to quickly compute $y=A_t x$. As the prover knows the chain's trajectory, he can apply to $x$ the same row operations he has performed to create $A_t$ and provide the correct answer in time $t$. On the other hand, if $t$ is large enough, a dishonest party may not be able to distinguish, in polynomial time, $A_t$ from a uniformly randomly chosen matrix (we say that the chain is computationally mixed), and its best solution would be to perform usual matrix-vector multiplication, which typically takes about $n^2$ operations. Hence, if the prover chooses $t$ as the computational mixing time and if this time is shown to be much smaller than $n^2$, the verifier will be able to distinguish honest provers from dishonest parties.

Now, it is reasonable to assume that if each column is mixed by time $t$, then the matrix $A_t$ is computationally mixed and no dishonest party would be able to distinguish $A_t$ from uniform (in polynomial time) and thus to answer the challenge in less than $n^2$ units of time, hereby motivating the study of the chain $\{\Z_t\}$. 

The question of determining the total-variation mixing time of the matrix walk is still largely open. \citet*{diaconis1996walks} showed that the $\ell_2$-mixing time was $O(n^4)$, and the powerful results of \citet{kassabov2005kazhdan} yield the upper-bound $O(n^3)$, which is also the best known upper-bound in for total-variation mixing. By a simple counting argument, the total-variation mixing time can be lower bounded by $\Omega\left(\tfrac{n^2}{\log n}\right)$ (which is actually an estimate of the diameter of the underlying graph, see \citet{andren2007complexity,christofides2014asymptotic}). 

\subsection*{Outline of the paper}
Before proving Theorem \ref{thm:cutoff_p=2}, we first state some useful properties of the birth-and-death chain given by the Hamming weight of $\Z_t$. In particular, we show that this projected chain also has cutoff at $\frac{3}{2}n\log n$ (Section \ref{sec:hamming}). Section \ref{sec:proof} is then devoted to the proof of Theorem \ref{thm:cutoff_p=2}.

\section{The Hamming weight}
\label{sec:hamming}

For a vertex $x\in\cX$, we denote by $H(x)$ the Hamming weight of $x$, \textit{i.e.}
\begin{eqnarray*}
H(x)&=&\sum_{i=1}^n x(i)\, .
\end{eqnarray*}

Consider the birth-and-death chain $H_t:=H(\Z_t)$, and denote by $P_H$, $\pi_H$, and $d_H(\cdot)$ its transition matrix, stationary distribution, and total-variation distance to equilibrium. For $1\leq k\leq n$, we have
\begin{eqnarray*}
P_H(k,k+1)&=&\frac{k(n-k)}{n(n-1)}\, ,\\
P_H(k,k-1)&=&\frac{k(k-1)}{n(n-1)}\, ,\\
P_H(k,k)&=&\frac{n-k}{n}\, ,
\end{eqnarray*}
and
\begin{eqnarray*}
\pi_H(k) &=& \frac{\binom{n}{k}}{2^n-1}\, .
\end{eqnarray*}
The hitting time of state $k$ is defined as 
\begin{eqnarray*}
T_k&=& \min\left\{t\geq 0,\, H_t=k\right\}\, .
\end{eqnarray*}
One standard result in birth-and-death chains is that, for $2\leq \ell\leq n$,
\begin{eqnarray}\label{eq:exp_T_l-1_l}
\EE_{\ell-1}(T_{\ell})&=& \frac{1}{P_H(\ell,\ell-1)}\sum_{i=1}^{\ell-1}\frac{\pi_H(i)}{\pi_H(\ell)}\, ,
\end{eqnarray}
(see for instance \cite[Section 2.5]{LePeWi09}). 
The following lemma will be useful.

\begin{lemma}\label{lem:var_Tk}
Let $0<\beta<1$ and $K = (1-\beta)\frac{n}{2}$. Then there exist constants $a_\beta, b_\beta\in\R$ depending on $\beta$ only such that
\begin{eqnarray*}
\EE_1(T_K)&\leq & n\log n +a_\beta n\, ,
\end{eqnarray*}
and
\begin{eqnarray*}
\var_1 T_K \leq b_\beta n^2\, .
\end{eqnarray*}
\end{lemma}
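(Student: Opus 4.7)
The strategy is to decompose $T_K$ using the strong Markov property. Since $\{H_t\}$ is a birth-and-death chain, any trajectory from $1$ to $K$ must pass through every intermediate integer, so $T_K = \sum_{\ell=2}^{K} \tau_\ell$ with $\tau_\ell := T_\ell - T_{\ell-1}$. The $\tau_\ell$'s are independent, and $\tau_\ell$ has the law of the hitting time of $\ell$ starting from $\ell-1$. Thus $\EE_1(T_K) = \sum_{\ell=2}^K \EE_{\ell-1}(T_\ell)$ and $\var_1 T_K = \sum_{\ell=2}^K \var_{\ell-1}(\tau_\ell)$.

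For the expected value, plug the explicit forms of $P_H$ and $\pi_H$ into \eqref{eq:exp_T_l-1_l} to get
$$\EE_{\ell-1}(T_\ell) = \frac{n(n-1)}{\ell(\ell-1)}\sum_{i=1}^{\ell-1} \frac{\binom{n}{i}}{\binom{n}{\ell}}.$$
I would isolate the dominant $i = \ell-1$ term, for which $\binom{n}{\ell-1}/\binom{n}{\ell} = \ell/(n-\ell+1)$. Summing its contribution $\frac{n(n-1)}{(\ell-1)(n-\ell+1)}$ over $\ell$ and applying the partial fraction $\frac{1}{(\ell-1)(n-\ell+1)} = \frac{1}{n}\bigl(\frac{1}{\ell-1} + \frac{1}{n-\ell+1}\bigr)$ yields $(n-1)\bigl(H_{K-1} + H_{n-1} - H_{n-K}\bigr) = (n-1)\log n + O_\beta(n)$, using that $K$ and $n-K$ are both of order $n$ and $H_m = \log m + O(1)$. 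The remaining terms ($i \leq \ell-2$) form a geometric-like series, because consecutive ratios $\binom{n}{i-1}/\binom{n}{i} = i/(n-i+1)$ are all bounded by $(1-\beta)/(1+\beta) < 1$ for $i \leq \ell \leq K$; summing them over $\ell$ contributes only $O_\beta(n)$.

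For the variance, decompose each $\tau_\ell$ as a compound geometric: starting from $\ell-1$, the chain waits a $\mathrm{Geom}((\ell-1)/n)$ number of steps before jumping, then goes up (hitting $\ell$) with probability $p' = (n-\ell+1)/(n-1)$, or else down to $\ell-2$, requiring an additional excursion distributed as $\tau_{\ell-1}$. Writing $N$ for the $\mathrm{Geom}(p')$ number of attempts, $G_i$ for the inter-move waits and $A_i$ for the return times, $\tau_\ell$ has the same law as $\sum_{i=1}^N G_i + \sum_{i=1}^{N-1} A_i$ with all summands independent. Setting $V_\ell := \var_{\ell-1}(\tau_\ell)$ and conditioning on $N$ yields the recursion
$$V_\ell \leq D_\ell + \frac{\ell-2}{n-\ell+1}\, V_{\ell-1}, \qquad D_\ell = O_\beta(n^2/\ell^2).$$
Because $(\ell-2)/(n-\ell+1) \leq (1-\beta)/(1+\beta) =: r < 1$ for $\ell \leq K$, summing the recursion over $\ell$ and rearranging gives $(1-r)\sum_{\ell=2}^{K-1} V_\ell + V_K \leq V_2 + \sum_{\ell=3}^K D_\ell = O_\beta(n^2)$, so $\var_1 T_K = O_\beta(n^2)$.

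The delicate point is obtaining the leading constant $1$ in the mean: bounding the entire inner binomial sum by a single geometric series would produce a prefactor like $(1+\beta)/(2\beta)$, so one must separate the $i = \ell-1$ term and evaluate its sum exactly via partial fractions. The variance is more forgiving since only the summed bound is needed; the individual estimates $V_\ell$ need not be tight for small $\ell$.
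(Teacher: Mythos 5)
Your overall route coincides with the paper's: decompose $T_K$ into independent successive hitting times, use \eqref{eq:exp_T_l-1_l} plus a geometric comparison of binomial coefficients for the mean, and a regeneration decomposition yielding a contraction recursion for the variance. The variance half is sound: your compound-geometric identity is just the unrolled form of the paper's one-step identity $T_{k+1}=1+(1-I)\ti{T}_{k+1}+IJ(\widehat{T}_k+\widehat{T}_{k+1})$, the coefficient $\frac{\ell-2}{n-\ell+1}$ in front of $V_{\ell-1}$ is the correct Wald-type factor $\EE[N-1]$, and summing the recursion (instead of proving $V_\ell=O_\beta(n^2/\ell^2)$ by induction, as the paper does) is a legitimate and slightly slicker way to finish, since $\sum_\ell D_\ell=O_\beta(n^2)$ and $1-r>0$.

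The gap is in the mean. You control the remainder terms $i\le\ell-2$ using only the \emph{uniform} ratio bound $i/(n-i+1)\le(1-\beta)/(1+\beta)=:r$. That bound shows the remainder for each $\ell$ is at most $\tfrac{r}{1-r}$ times the dominant term $\tfrac{n(n-1)}{(\ell-1)(n-\ell+1)}$, and summing over $\ell\le K$ then gives $\tfrac{r}{1-r}\,n\log n+O_\beta(n)$, not $O_\beta(n)$ --- which would ruin exactly the leading constant $1$ you flag as the delicate point. The repair is the $\ell$-local ratio: for $i\le \ell-1$ one has $\binom{n}{i}\le\binom{n}{\ell-1}\bigl(\tfrac{\ell-1}{n-\ell+2}\bigr)^{\ell-1-i}$, so the remainder at index $\ell$ is at most $\tfrac{\ell-1}{n-2\ell+3}$ times the dominant term, i.e.\ at most $\tfrac{n(n-1)}{(n-\ell+1)(n-2\ell+3)}\le\tfrac{2}{\beta(1+\beta)}$ uniformly in $\ell\le K$, hence $O_\beta(n)$ in total; this is precisely the estimate behind the paper's \eqref{eq:bound_mu_k}. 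Relatedly, your closing remark is mistaken: bounding the \emph{whole} inner sum by a single geometric series does not force a prefactor $(1+\beta)/(2\beta)$ on the $n\log n$ term, because with the local ratio the loss factor is $\tfrac{n-\ell+2}{n-2\ell+3}=1+O_\beta(\ell/n)$, negligible exactly where the terms are of size $n/\ell$; so isolating the $i=\ell-1$ term, while correct, is not actually necessary.
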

\begin{proof}[Proof of Lemma \ref{lem:var_Tk}]
For $2\leq k\leq K$, let $\mu_{k}=\EE_{k-1} T_{k}$ and $v_{k}=\var_{k-1}(T_{k})$. Invoking~\eqref{eq:exp_T_l-1_l}, we have
\begin{eqnarray}\label{eq:bound_mu_k}
\mu_k&=&\sum_{i=1}^{k-1} \frac{\binom{n}{i}}{\binom{n-2}{k-2}}\leq  \frac{\binom{n}{k-1}}{\binom{n-2}{k-2}}\sum_{i=1}^{k-1}\left(\frac{k-1}{n-k+2}\right)^{k-i-1}\leq  \frac{n(n-1)}{(k-1)(n-2k+1)}
\end{eqnarray}
Summing from $2$ to $K$ yields the desired bound on $\EE_1 T_K$. Moving on to the variance, by independence of the successive hitting times, we have
\begin{eqnarray*}
\var_1 T_K&=& \sum_{k=1}^{K-1} v_{k+1}\, .
\end{eqnarray*} 
Hence, it is sufficient to show that there exists a constant $c_\beta>0$ such that $v_{k+1}\leq \frac{c_\beta n^2}{k^2}$ for all $k\leq K$. To do so, we consider the following distributional identity for the hitting time $T_{k+1}$ starting from $k$:
\begin{eqnarray*}
T_{k+1}&=& 1+(1-I)\ti{T}_{k+1} + IJ(\widehat{T}_k +\widehat{T}_{k+1})\, ,
\end{eqnarray*}
where $I$ is the indicator that the chain moves (\textit{i.e.} that a one is picked as updating coordinate), $J$ is the indicator that the chain decreases given that it moves (\textit{i.e.} that the chosen one is added to another one), $\ti{T}_{k+1}$ and $\widehat{T}_{k+1}$ are copies of $T_{k+1}$, and $\widehat{T}_k$ is the hitting time of $k$ starting from $k-1$. All those variables may be assumed to be independent. We obtain the following induction relation:
\begin{eqnarray*}
v_{k+1}&=& \frac{k-1}{n-1}(v_k+v_{k+1})+\left(1-\frac{k}{n}\right)\mu_{k+1}^2 +\frac{k-1}{n-1}\left(1-\frac{k(k-1)}{n(n-1)}\right)(\mu_k+\mu_{k+1})^2\\
&\leq & \frac{k}{n}(v_k+v_{k+1})+\mu_{k+1}^2 +\frac{k}{n}(\mu_k+\mu_{k+1})^2\, .
\end{eqnarray*}
Using the fact that for all $k\leq K$, we have $\mu_k\leq \frac{n}{\beta k}$ (which can be seen by inequality \eqref{eq:bound_mu_k}), and after some simplification, 
\begin{eqnarray*}
v_{k+1}&\leq & \frac{k}{n-k}v_k+\frac{3n^3}{\beta^2k^2(n-k)}\;\leq \; \frac{k}{n-k}v_k+\frac{6n^2}{\beta^2k^2}\, \cdot 
\end{eqnarray*}
By induction and using that $v_2\leq n^2$, we obtain that $v_{k+1}\leq \frac{c_\beta n^2}{k^2}$ for all $k\leq K$.

\end{proof}

The following proposition establishes cutoff for the chain $\{H_t\}$ and will be used in the next section to prove cutoff for the chain $\{\Z_t\}$.

\begin{proposition}\label{prop:cutoff-hamming}
The chain $H_t$ exhibits cutoff at time $\frac{3}{2}n\log n$ with window $n$. 
\end{proposition}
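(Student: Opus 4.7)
Let $t^{\star} := \frac{3}{2} n \log n$ and $K' := \lfloor n/2 - C\sqrt{n} \rfloor$ for a large constant $C > 0$. The plan is to reduce the cutoff for $\{H_t\}$ to the concentration of the hitting time $T_{K'}$ around $t^{\star}$. I would first establish the following extension of Lemma~\ref{lem:var_Tk}: for any fixed $C$ and $n$ large,
\begin{eqnarray*}
\EE_1[T_{K'}] &=& t^{\star} + O(n),\\
\var_1 T_{K'} &=& O(n^2).
\end{eqnarray*}
The calculation in Lemma~\ref{lem:var_Tk} already handles the range $k \leq (1-\beta) n/2$ and contributes $n\log n + O(n)$ to the mean; the additional $\tfrac{1}{2} n\log n$ comes from summing $\mu_k$ over $k \in [(1-\beta)n/2, K']$, where $\mu_k \asymp n/(n/2-k)$, so that
\[
\sum_{k=(1-\beta)n/2}^{K'} \mu_k \;\asymp\; \sum_{m = C\sqrt{n}}^{\beta n/2} \frac{n}{m} \;=\; \tfrac{1}{2} n\log n + O(n).
\]
The variance bound follows by the same inductive recursion as in the proof of Lemma~\ref{lem:var_Tk}, now yielding $v_k = O(n^2/(n/2-k)^2)$ in the extended range, whose contribution to $\var_1 T_{K'}$ is only $O(n^{3/2})$.

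\textbf{Lower bound.} By Chebyshev's inequality applied to $T_{K'}$, at time $t = t^{\star} - \alpha n$ we have $H_t < K'$ with probability at least $1 - O(\alpha^{-2})$. Since the set $A := \{k < K'\}$ has $\pi_H(A) \leq e^{-2C^2}$ by Hoeffding's inequality for the binomial,
\[
d_H\bigl(t^{\star} - \alpha n\bigr) \;\geq\; \PP_1(H_t \in A) - \pi_H(A) \;\geq\; 1 - O(\alpha^{-2}) - e^{-2C^2},
\]
which tends to $1$ as first $\alpha \to \infty$ and then $C \to \infty$.

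\textbf{Upper bound.} To turn the hitting-time concentration into an upper bound on $d_H$, I would invoke the standard hitting-time characterization of cutoff for reversible chains: when $\trel = o(\tmix)$, cutoff at time $\hat t_n$ within a window $w_n$ follows from the concentration of the hitting time of a set $A_n$ of stationary mass tending to $1$. Since $\trel = O(n)$ by \citet*{chung1997stratified} and $\tmix = \Theta(n \log n)$, and since $T_{\{k \geq K'\}} = T_{K'}$ concentrates at $t^{\star}$ within a window $n$ by the above estimates, this framework delivers $d_H(t^{\star} + \alpha n) \to 0$ as $\alpha \to \infty$.

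\textbf{Main obstacle.} The principal technical work is the extension of Lemma~\ref{lem:var_Tk} to the hitting level $K' = n/2 - C\sqrt{n}$: the per-step expected hitting times $\mu_k = P_H(k,k-1)^{-1} \sum_{i=1}^{k-1} \pi_H(i)/\pi_H(k)$ transition from $\mu_k \asymp n/k$ (for $k = o(n)$) to $\mu_k \asymp n/(n/2-k)$ (for $k$ within $o(n)$ of $n/2$), and matching lower and upper bounds on $\mu_k$ throughout, together with careful bookkeeping in the Gaussian transition region $|k-n/2| = \Theta(\sqrt{n})$, are required to secure the sharp constant $\tfrac{3}{2}$ in $t^{\star}$. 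Once these hitting-time estimates are in place, the conversion to a TV cutoff is relatively standard.
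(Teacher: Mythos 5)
Your lower bound and your hitting-time estimates point in a reasonable direction, but the upper bound rests on a reduction principle that is false as stated, and this is a genuine gap. You claim that when $\trel=o(\tmix)$, cutoff ``follows from the concentration of the hitting time of a set $A_n$ of stationary mass tending to $1$.'' It does not: consider a random walk on a segment of length $L$ with constant bias towards the right endpoint; there $\trel=O(1)$, $\tmix\asymp L$, and the set consisting of the right half has stationary mass $1-e^{-cL}$, yet its hitting time concentrates at $\asymp L/2$, half the mixing time. The correct statement (Basu--Hermon--Peres; for birth-and-death chains, Ding--Lubetzky--Peres) requires control of the \emph{worst} hitting time over all sets of stationary measure at least a fixed $\alpha$, from all starting states. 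For $H_t$ such a set may lie entirely at or above level $n/2+\Theta(\sqrt n)$, so you must control $T_k$ through and slightly beyond the center, where the drift vanishes and then reverses; your estimates stop at $K'=n/2-C\sqrt n$, and the inductive recursion you borrow from Lemma~\ref{lem:var_Tk} degenerates there (its contraction factor $k/(n-k)$ passes through $1$). This extra stretch costs only $O(n)$ in mean and $O(n^2)$ in variance, so the plan can be repaired, but the repair is precisely the part you have not supplied; you would also need to check the laziness hypotheses of the cited framework ($P_H(k,k)=(n-k)/n$ is not bounded below by $1/2$ for $k>n/2$) and that its additive error is $O_\varepsilon(\trel)$ rather than the generic $\sqrt{\trel\,\tmix}=n\sqrt{\log n}$, which would lose the window $n$. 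By contrast, the paper avoids this machinery entirely: it only needs the hitting time of level $n/3$ (Lemma~\ref{lem:var_Tk} as is), and handles the remaining $\tfrac12 n\log n$ by an explicit non-crossing coupling with a stationary copy together with a drift/supermartingale analysis of $D_t=n/2-H_t$ and the fall-through-zero bound of \cite[Proposition 17.20]{LePeWi09}.

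Two smaller points. First, your per-level variance claim $v_k=O\bigl(n^2/(n/2-k)^2\bigr)$ in the extended range is not what the recursion gives: with $m=n/2-k$ the recursion $v_{k+1}\leq(1-\Theta(m/n))v_k+O(n^2/m^2)$ equilibrates at $v_k\asymp n^3/m^3$ (consistent with the first-passage variance $\asymp\delta^{-3}$ of a walk with bias $\delta\asymp m/n$); the total over $m\geq C\sqrt n$ is still $O(n^2)$, so your conclusion survives but the stated intermediate bound does not. Second, in the lower bound you fix $C$, let $\alpha\to\infty$ and then $C\to\infty$; the cutoff statement requires a bound in $\alpha$ alone, so you should take $C=C(\alpha)$ (and you also need a matching \emph{lower} bound on $\EE_1 T_{K'}$, which you assert but do not derive; the paper's lower bound sidesteps this via stochastic domination by a sum of geometrics and comparison with the lazy hypercube walk).
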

\begin{proof}
For the lower bound, we want to show that for $t=\frac{3}{2}n\log n -2\alpha n$
\begin{eqnarray*}
d_H(t)&\geq & 1-\varepsilon(\alpha)\, ,
\end{eqnarray*}
where $\varepsilon(\alpha)\to 0$ as $\alpha\to +\infty$.
Consider the chain started at $H_0=1$ and let $k=\frac{n}{2}-\alpha \sqrt{n}$ and $A=\{k,k+1,\dots,n\}$. By definition of total-variation distance,
\begin{eqnarray*}
d_H(t)&\geq & \pi_H(A)-P_H^t(1,A)\,\geq \, \pi_H(A) -\PP_1(T_k\leq t)\, .
\end{eqnarray*}
By the Central Limit Theorem, $\displaystyle{\lim_{\alpha\to\infty}\lim_{n\to \infty}\pi_H(A)=1}$. Moving on to ${\PP_1(T_k\leq t)}$, let us write 
\begin{eqnarray*}
\PP_1(T_k\leq t)&=&\PP_1\left(T_{n/3}\leq n\log n-\alpha n\right)+\PP_{n/3}\left(T_k\leq \frac{n\log n}{2}-\alpha n\right)\, .
\end{eqnarray*}
Note that $T_{n/3}$ is stochastically larger than $\sum_{i=1}^{n/3} G_i$, where $(G_i)_{i=1}^{n/3}$ are independent Geometric random variables with respective parameter $i/n$ (this is because at each step, we need at least to pick a one to just move from the current position). By Chebyshev's Inequality,
\begin{eqnarray*}
\PP_1\left(T_{n/3}\leq n\log n-\alpha n\right) & = & O\left(\frac{1}{\alpha^2}\right)\, . 
\end{eqnarray*}
Now, starting from Hamming weight $n/3$ and up to time $T_k$, we may couple $H_t$ with $\ti{H}_t$, the Hamming weight of the standard lazy random walk on the hypercube (at each step, pick a coordinate uniformly at random and randomize the bit at this coordinate), in such a way $T_k\geq S_k$, where $S_k=\inf\{t\geq 0, \ti{H}_t=k\}$. It is known that $S_k$ satisfies
\begin{eqnarray*}
\PP_{n/3}\left(S_k\leq \frac{n\log n}{2}-\alpha n\right)&\leq& \varepsilon(\alpha)\, ,
\end{eqnarray*}
with $\varepsilon(\alpha)\to 0$ as $\alpha\to+\infty$ (see for instance the proof of \cite[Proposition 7.13]{LePeWi09}), which concludes the proof of the lower bound.

For the upper bound, letting $t=\frac{3}{2}n\log n +2\alpha n$, we have
\begin{eqnarray}\label{eq:d_H}
d_H(t)&\leq & \PP_1\left(T_{n/3}> n\log n+\alpha n\right)+\max_{k\geq n/3}d_H^{(k)}\left(\frac{n\log n}{2}+\alpha n\right)\, .
\end{eqnarray}
Lemma \ref{lem:var_Tk} entails that $T_{n/3}$ concentrates well: $\EE_1 (T_{n/3})=n\log n +cn$ for some absolute constant $c$, and $\var_1(T_{n/3})=O(n^2)$. By Chebyshev's Inequality, 
\begin{eqnarray}\label{eq:T_n/3}
\PP_1\left(T_{n/3}>n\log n +\alpha n\right)&= & O\left(\frac{1}{\alpha^2}\right)\, .
\end{eqnarray}

To control the second term in the right-hand side of \eqref{eq:d_H}, we use the coupling method (see \citet[Corollary 5.3]{LePeWi09}). For all starting point $k\geq n/3$, we consider the following coupling between a chain $H_t$ started at $k$ and a chain $H_t^\pi$ started from stationarity: at each step $t$, if $H_t$ makes an actual move (a one is picked as updating bit in the underlying chain $\Z_t$), we try ``as much as possible'' not to move $H_t^\pi$ (picking a zero as updating bit). Conversely, when $H_t$ does not move, we try ``as much as possible'' to move $H_t^\pi$, the goal being to increase the chance that the two chains do not cross each other (by moving at the same time). The chains stay together once they have met for the first time. We claim that the study of the coupling time can be reduced to the study of the first time when the chain started at $n/3$ reaches $n/2$. Indeed, when both chains have reached $n/2$, either they have met, or they have crossed each other. In this last situation, we know however that the expected time of their first return to $n/2$ is $O(\sqrt{n})$, so that $\PP_{n/2}\left(T_{n/2}^+>\sqrt{\alpha n}\right)=O(1/\sqrt{\alpha})$. Moreover, thanks to our coupling, during each of those excursions, the chains have positive probability to meet, so that after an additional time of order $\alpha\sqrt{n}$ we can guarantee that they have met with large probability.  Moreover, as $\pi_H([2n/3,n])=o(1)$, with high probability, $H_0^\pi\leq 2n/3$, and as starting from a larger Hamming weight can only speed up the chain, $\PP_{2n/3}(T_{n/2}>t)\leq \PP_{n/3}(T_{n/2}>t)$. We are thus left to prove that $\PP_{n/3}\left(T_{n/2}>\frac{n\log n}{2}+\alpha n\right)\leq \varepsilon(\alpha)$, for a function $\varepsilon$ tending to $0$ at $+\infty$.

Starting from $H_0=n/3$, we first argue that $H_t$ will remain above $2n/7$ for a very long time. Namely, defining $\displaystyle{\cG_t=\left\{T_{2n/7}>t\right\}}$, we have 
\begin{eqnarray}\label{eq:good-hamming}
\PP_{n/3}\left(\cG_{n^2}\right)&=&1-o(1)\, .
\end{eqnarray}
This can easily be seen by considering $T_k^+=\min\{t\geq 1,\, H_t=k\}$ and taking a union bound over the excursions around $k=n/3$ which visit $m=2n/7$:
\begin{eqnarray*}
\PP_{k}(T_m\leq n^2) &\leq & n^2\PP_k(T_m\leq T_k^+)\, ,
\end{eqnarray*}
and
\begin{eqnarray*}
\PP_k(T_m\leq T_k^+)&=&\frac{\EE_k(T_k^+)}{\EE_m(T_k)+\EE_k(T_m)}\,\leq\, \frac{\EE_k(T_k^+)}{\EE_m(T_m^+)} \,=\, \frac{\pi_H(m)}{\pi_H(k)} \, ,
\end{eqnarray*}
which decreases exponentially fast in $n$.

Our goal now will be to analyse the tail of $\tau=\inf\{t\geq 0,\, D_t\leq 0\}$, where
\begin{eqnarray*}
D_t&=& \frac{n}{2}-H_{t}\, .
\end{eqnarray*}
Observe that
\begin{eqnarray}\label{eq:D_t-moves}
D_{t+1}-D_t= 
\begin{cases}
1 &\mbox{with probability $\frac{H_t(H_t-1)}{n(n-1)}$\, }\vspace{2mm}\\

-1 &\mbox{with probability $\frac{H_t(n-H_t)}{n(n-1)}$\, }
\vspace{2mm}\\
0 &\mbox{otherwise.}
\end{cases}
\end{eqnarray}
We get
\begin{eqnarray}\label{eq:D_t_cond}
\EE\left[D_{t+1}-D_t\given D_t\right]&=&-\frac{2\left(\frac{n}{2}-D_t\right)\left(D_t+1\right)}{n(n-1)}\; \leq \; -\frac{D_t}{n} +\frac{2D_t^2}{n(n-1)}\, \cdot
\end{eqnarray}
Writing a similar recursion for the second moment of $D_t$ gives
\begin{eqnarray*}
\EE\left[D_{t+1}^2-D_t^2\given D_t\right]
&=& -\frac{4H_tD_t(D_t+1/2)}{n(n-1)} +\frac{H_t}{n}\;\leq \; -\frac{4H_tD_t^2}{n^2} +2\, .
\end{eqnarray*}
On the event $\cG_t$, 
\begin{eqnarray*}
\EE\left[D_{t+1}^2-D_t^2\given D_t\right]
&\leq & -\frac{8D_t^2}{7n} +2\, .
\end{eqnarray*}
By induction, letting $\cD_t=\ind_{\cG_t}D_t$ (and noticing that $\cG_{t+1}\subset\cG_t$), we get
\begin{eqnarray*}
\EE\left[\cD_t^2\right]&\leq & \EE[D_0^2]\left(1-\frac{8}{7n}\right)^t + \frac{7n}{4}\;\leq \; \frac{n^2}{4}\e^{-8t/7n} +2n\, .
\end{eqnarray*}
Plugging this back in \eqref{eq:D_t_cond},
\begin{eqnarray*}
\EE\left[\cD_{t+1}\right]&\leq & \left(1-\frac{1}{n}\right) \EE\left[\cD_t\right]+\e^{-8t/7n} +4/n\, ,
\end{eqnarray*}
and by induction,
\begin{eqnarray}\label{eq:exp_cd_t}
\EE\left[\cD_t\right]&\leq & an\e^{-t/n}+b\, ,
\end{eqnarray}
for absolute constants $a,b\geq 0$. Also, letting $\tau_\star=\inf\{t\geq 0,\, \cD_t=0\}$, we see by \eqref{eq:D_t-moves} that, provided $\tau_\star > t$, the process $\{\cD_t\}$ is at least as likely to move downwards than to move upwards and that there exists a constant $\sigma^2>0$ such that $\var\left(\cD_{t+1}\given \cD_t\right)\geq \sigma^2$ (this is because, on $\cG_t$ the probability to make a move at time $t$ in larger than some positive absolute constant). By \citet[Proposition 17.20]{LePeWi09}, we know that for all $u>0$ and $k\geq 0$,
\begin{eqnarray}\label{eq:random-walk}
\PP_{k}(\tau_\star >u)&\leq & \frac{4k}{\sigma\sqrt{u}}\, .
\end{eqnarray}
Now take $H_0=n/3$, $D_0=n/6$, $s=\frac{1}{2}n\log n$ and $u=\alpha n$. We have
\begin{eqnarray*}
\PP_{D_0}\left(\tau >s+u\right)&\leq & \PP_{\cD_0}\left(\tau_\star >s+u\right)+\PP_{H_0}\left({\cG^c_{n^2}}\right)\, .
\end{eqnarray*}
By equation \eqref{eq:good-hamming}, $\PP_{H_0}\left({\cG^c_{n^2}}\right)=o(1)$, and, combining \eqref{eq:random-walk} and \eqref{eq:exp_cd_t}, we have
\begin{eqnarray*}
\PP_{\cD_0}\left(\tau_\star >s+u\right)
&= & \EE_{\cD_0}\left[
\PP_{\cD_s}\left(\tau_\star>u\right)\right]\; \leq \; \EE_{\cD_0}\left[\frac{4\cD_s}{\sigma\sqrt{u}}\right]\; =\; O\left(\frac{1}{\sqrt{\alpha}}\right)\, ,
\end{eqnarray*}
which implies
\begin{eqnarray}\label{eq:d_H_n/3}
\max_{k\geq n/3}d_H^{(k)}\left(s+u\right)&=& O\left(\frac{1}{\sqrt{\alpha}}\right)\, ,
\end{eqnarray}
and concludes the proof of the upper bound.
\end{proof}

\section{Proof of Theorem \ref{thm:cutoff_p=2}}
\label{sec:proof}

First note that, as projections of chains can not increase total-variation distance, the lower bound on $d(t)$ readily follows from the lower bound on $d_H(t)$, as established in Proposition \ref{prop:cutoff-hamming}. Therefore, we only have to prove the upper bound.

Let $\cE=\{x\in\cX,\, H(x)\geq n/3\}$ and $\tau_\cE$ be the hitting time of set $\cE$. For all $t,s>0$, we have
\begin{eqnarray*}
d(t+s)&\leq & \max_{x_0\in\cX}\PP_{x_0}\left(\tau_{\cE}>s\right)+\max_{x\in\cE}d_{x}(t)\, .
\end{eqnarray*}
By \eqref{eq:T_n/3}, taking $s=n\log n +\alpha n$, we have $\max_{x_0\in\cX}\PP_{x_0}(\tau_\cE>s)=O(1/\alpha^2)$, so that our task comes down to showing that for all $x\in\cE$,
\begin{eqnarray*}
d_{x}\left(\frac{n\log n}{2} +\alpha n\right)\leq \varepsilon(\alpha)\, ,
\end{eqnarray*}
with $\varepsilon(\alpha)\to 0$ as $\alpha\to +\infty$. Let us fix $x\in\cE$. Without loss of generality, we may assume that $x$ is the vertex with $\hx\geq n/3$ ones on the first $\hx$ coordinates, and $n-\hx$ zeros on the last $n-\hx$ coordinates. We denote by $\{\Z_t\}$ the random walk started at $\Z_0=x$ and for a vertex $z\in\cX$, we define a two-dimensional object $\W(z)$, keeping track of the number of ones within the first $\hx$ and last $n-\hx$ coordinates of $z$, that is
\begin{eqnarray*}
\W(z)&=& \left(\sum_{i=1}^{\hx} z(i),\, \sum_{i=\hx+1}^{n} z(i)\right)\, .
\end{eqnarray*}
The projection of $\{\Z_t\}_{t\geq 0}$ induced by $\W$ will be denoted $\W_t=\W(\Z_t)=(\X_t,\Y_t)$. We argue that the study of $\{\Z_t\}_{t\geq 0}$ can be reduced to the study of $\{\W_t\}_{t\geq 0}$, and that, when coupling two chains distributed as $\W_t$, we can restrict ourselves to initial states with the same total Hamming weight. Indeed, letting $\nu_{\hx}$ be the uniform distribution over $\{z\in\cX,\, H(z)=\hx\}$, by the triangle inequality 
\begin{eqnarray}\label{eq:triangle}
d_{x_0}(t)
&\leq & \big\| \PP_{x}\left(\Z_t\in\cdot\right)-\PP_{\nu_{\hx}}\left(\Z_t\in\cdot\right)\big\|_{\textsc{tv}} + 
\big\| \PP_{\nu_{\hx}}\left(\Z_t\in\cdot\right)-\pi(\cdot)\big\|_{\textsc{tv}}
\end{eqnarray}
Starting from $\nu_{\hx}$, the conditional distribution of $\Z_t$ given $\{H(\Z_t)=h\}$ is uniform over $\{y\in\cX,\, H(y)=h\}$. This entails
\begin{eqnarray*}
\big\| \PP_{\nu_{\hx}}\left(\Z_t\in\cdot\right)-\pi(\cdot)\big\|_{\textsc{tv}}&=& \big\| \PP_{\hx}\left(H_t\in\cdot\right)-\pi_H(\cdot)\big\|_{\textsc{tv}}\, .
\end{eqnarray*}
For $t=\frac{n\log n}{2} +\alpha n$, we know by \eqref{eq:d_H_n/3} in the proof of Proposition \ref{prop:cutoff-hamming} that 
\begin{eqnarray*}
\big\| \PP_{\hx}\left(H_t\in\cdot\right)-\pi_H(\cdot)\big\|_{\textsc{tv}}&=&O\left(1/\sqrt{\alpha}\right)\, . 
\end{eqnarray*}
As for the first term in the right-hand side of \eqref{eq:triangle}, note that if $z$ and $z'$ are two vertices such that $\W(z)=\W(z')$, then for all $t\geq 0$, $\PP_{x}(\Z_t=z)= \PP_{x}(\Z_t=z')$, and that for all $y\in\cX$ such that $\W(y)=(k,\ell)$
\begin{eqnarray*}
\PP_{\nu_{\hx}}\left(\Z_t=y\right)&=& \sum_{\substack{i,j \\ i+j=\hx}}\sum_{z,\,\W(z)=(i,j)}\frac{1}{\binom{n}{\hx}}\PP_z\left(\Z_t=y\right)\\
&=& \sum_{\substack{i,j \\ i+j=\hx}}\frac{\binom{\hx}{i}\binom{n-\hx}{j}}{\binom{n}{\hx}}\sum_{z,\,\W(z)=(i,j)}\frac{\PP_z\left(\Z_t=y\right)}{\binom{\hx}{i}\binom{n-\hx}{j}}\\
&=& \sum_{\substack{i,j \\ i+j=\hx}}\frac{\binom{\hx}{i}\binom{n-\hx}{j}}{\binom{n}{\hx}}\frac{\PP_{(i,j)}\left(\W_t=(k,\ell)\right)}{\binom{\hx}{k}\binom{n-\hx}{\ell}}\, \cdot
\end{eqnarray*}
Hence,
\begin{eqnarray*}
\big\| \PP_{x}\left(\Z_t\in\cdot\right)-\PP_{\nu_{\hx}}\left(\Z_t\in\cdot\right)\big\|_{\textsc{tv}}
&\leq & \max_{\substack{i,j \\ i+j=\hx}}\big\| \PP_{(\hx,0)}\left(\W_t\in\cdot\right)-\PP_{(i,j)}\left(\W_t\in\cdot\right)\big\|_{\textsc{tv}}\, .
\end{eqnarray*}

Now let $y\in\cE$ such that $H(y)=\hx$, and consider the chains $\Z_t, \ti{\Z}_t$ started at $x$ and $y$ respectively. Let $\W(\Z_t)=(\X_t,\Y_t)$ and $\W(\ti{\Z}_t)=(\ti{\X}_t,\ti{\Y}_t)$. We couple $Z_t$ and $\ti{\Z}_t$ as follows: at each step $t$, provided $H(\Z_t)=H(\ti{\Z}_t)$ and $\W(\Z_t)\neq\W(\ti{\Z}_t)$, we consider a random permutation $\pi_t$ which is such that $\Z_t(i)=\ti{\Z}_t(\pi_t(i))$ for all $1\leq i\leq n$, that is, we pair uniformly at random the ones (resp. the zeros) of $\Z_t$ with the ones (resp. the zeros) of $\ti{\Z}_t$ (one such pairing of coordinates is depicted in Figure \ref{fig:coupling}). If $\Z_t$ moves to $\Z_{t+1}$ by choosing the pair $(i_t,j_t)$ and updating $\Z_{t}(j_t)$ to $\Z_{t}(j_t)+\Z_{t}(i_t)$, then we move from $\ti{\Z}_t$ to $\ti{\Z}_{t+1}$ by updating $\ti{\Z}_t(\pi_t(j_t))$ to $\ti{\Z}_t(\pi_t(j_t))+\ti{\Z}_t(\pi_t(i_t))$. Once $\W(\Z_t)=\W(\ti{\Z}_t)$, the permutation $\pi_t$ is chosen in such a way that the ones in the top (resp. in the bottom) in $Z_t$ are matched with the ones in the top (resp. in the bottom) in $\ti{\Z}_t$, guaranteeing that from that time $\W(\Z_t)$ and $\W(\ti{\Z}_t)$ remain equal. Note that this coupling ensures that for all $t\geq 0$, the Hamming weight of $\Z_t$ is equal to that of $\ti{\Z}_t$, and we may unequivocally denote it by $H_t$. In particular, coupling of the chains $\W(\Z_t)$ and $\W(\ti{\Z}_t)$ occurs when $\X_t$ and $\ti{\X}_t$ are matched. As $\X_t\geq\ti{\X}_t$ for all $t\geq 0$, we may consider
\begin{eqnarray*}
\tau=\inf\{t\geq 0,\, \bD_t=0\}\, ,
\end{eqnarray*}
where $\bD_t=\X_t-\ti{\X}_t$.

\begin{figure}
\centering
\begin{tikzpicture}[scale=0.6,>=latex]

\node[blue] at (2,13) {$1$};
\node at (2,12) {$0$};
\node[blue] at (2,11) {$1$};
\node[blue] at (2,10) {$1$};
\node[blue] at (2,9) {$1$};
\node at (2,8) {$0$};
\node[blue] at (2,7) {$1$};
\node at (2,6) {$0$};
\node at (2,5) {$0$};
\node at (2,4) {$0$};
\node[blue] at (2,3) {$1$};
\node[blue] at (2,2) {$1$};
\node at (2,1) {$0$};

\node at (8,13) {$0$};
\node[blue] at (8,12) {$1$};
\node at (8,11) {$0$};
\node at (8,10) {$0$};
\node[blue] at (8,9) {$1$};
\node[blue] at (8,8) {$1$};
\node at (8,7) {$0$};
\node[blue] at (8,6) {$1$};
\node at (8,5) {$0$};
\node[blue] at (8,4) {$1$};
\node[blue] at (8,3) {$1$};
\node at (8,2) {$0$};
\node[blue] at (8,1) {$1$};

\draw[dashed,color=red] (-2,6.5) -- (12,6.5);

\draw[color=blue!50!green] (2.2,13) -- (7.8,9);
\draw[dotted,color=brown!50!black,line width=0.4mm] (2.2,12) -- (7.8,7);
\draw[color=blue!50!green](2.2,11) -- (7.8,12);
\draw[color=blue!50!green] (2.2,10) -- (7.8,4);
\draw[color=blue!50!green] (2.2,9) -- (7.8,6);
\draw[dotted,color=brown!50!black,line width=0.4mm] (2.2,8) -- (7.8,13);
\draw[color=blue!50!green] (2.2,7) -- (7.8,3);

\draw[dotted,color=brown!50!black,line width=0.4mm] (2.2,6) -- (7.8,10);
\draw[dotted,color=brown!50!black,line width=0.4mm] (2.2,5) -- (7.8,2);
\draw[dotted,color=brown!50!black,line width=0.4mm] (2.2,4) -- (7.8,11);
\draw[color=blue!50!green] (2.2,3) -- (7.8,8);
\draw[color=blue!50!green] (2.2,2) -- (7.8,1);
\draw[dotted,color=brown!50!black,line width=0.4mm] (2.2,1) -- (7.8,5);

\draw[<->] (1,6.6) -- (1,13);
\node at (0.5,10) {$\hx$};
\draw[<->] (1,1) -- (1,6.4);
\node at (0,3.5) {$n-\hx$};
\end{tikzpicture}
\caption{A pairing of coordinates of $\Z_t$ and $\ti{\Z}_t$ .}
\label{fig:coupling}
\end{figure}
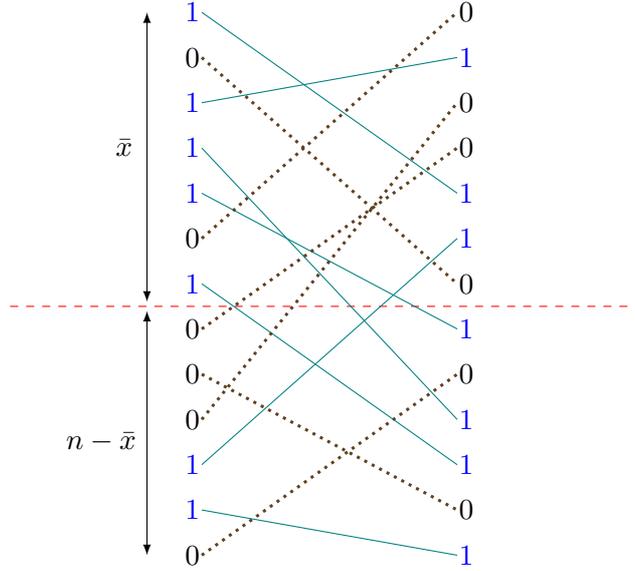

Before analyzing the behavior of $\{\bD_t\}$, we first notice that the worst possible $y$ for the coupling time satisfies $\W(y)=\left(\max\{0,2\hx-n\},\min\{\hx,n-\hx\}\right)$. We now fix $y$ to be such a vertex, and show that, starting from $x,y$, the variables $\W(\Z_t), \W(\ti{\Z}_t)$ remain ``nice'' for a very long time. More precisely, defining
\begin{eqnarray*}
\cB_t&=&\bigcap_{s=0}^t\left\{H_s\geq 2n/7,\, \X_s\geq \frac{\hx}{p},\, \ti{\Y}_s\geq \frac{\min\{\hx,n-\hx\}}{p}\right\}\, ,
\end{eqnarray*}
we claim that we can choose $p \geq 1$ fixed such that
\begin{eqnarray}\label{eq:good-hamming-2}
\PP_{x,y}\left(\cB_{n^2}\right)&=&1-o(1)\, .
\end{eqnarray}
Indeed, the fact that $\PP_{n/3}(T_{2n/7}\leq n^2)=o(1)$ has already been established in the proof of Proposition \ref{prop:cutoff-hamming} (equation \eqref{eq:good-hamming}). Let us show, with the same kind of arguments, that $\displaystyle{\PP_{(\hx,0)}\left(\cup_{s=0}^{n^2} \{\X_s<\hx/p\}\right)=o(1)}$. Letting $A=\{(\hx/p,\ell),\, \ell=0,\dots,n-\hx\}$, $\pi_\W$ be the stationary distribution of $\W_t$, and $k_{\hx}=\min\left\{\frac{\hx}{2},\frac{n-\hx}{2}\right\}$, we have
\begin{eqnarray*}
\PP_{(\hx,0)}(T_A\leq n^{2})&\leq & \PP_{(\hx/2,k_{\hx})}(T_A\leq n^{2})\;\leq \; n^{2}\sum_{\ell=0}^{n-\hx} \PP_{(\hx/2,k_{\hx})}\left(T_{(\hx/p,\ell)}\leq T_{(\hx/2,k_{\hx})}^+\right)\\
&\leq & n^{2}\sum_{\ell=0}^{n-\hx}\frac{\pi_\W(\hx/p,\ell)}{\pi_\W(\hx/2,k_{\hx})}\;=\; \frac{n^{2} 2^{n-\hx}\binom{\hx}{\hx/p}}{\binom{\hx}{\hx/2}\binom{n-\hx}{k_{\hx}}}\, ,
\end{eqnarray*}
and we can choose $p$ large enough such that this quantity decreases exponentially fast in $n$. Similarly, starting from $y$, the value of $\ti{Y}_s$ will remain at a high level for a very long time, establishing \eqref{eq:good-hamming-2}.

Let us now turn to the analysis of $\{\bD_t\}$. On the event $\{t<\tau\}$,
\begin{eqnarray}\label{eq:bD_t-moves}
\bD_{t+1}-\bD_t= 
\begin{cases}
1 &\mbox{with probability $p_1^t$\, }\vspace{2mm}\\

-1 &\mbox{with probability $p_{-1}^t$\, }
\vspace{2mm}\\
0 &\mbox{otherwise,}
\end{cases}
\end{eqnarray}
where 
\begin{eqnarray*}
p_1^t&=& \frac{H_t}{n}\cdot\frac{n-H_t}{n-1}\cdot\frac{\hx-\X_t}{n-H_t}\cdot \frac{n-\hx-\ti{\Y}_t}{n-H_t} + \frac{H_t}{n}\cdot\frac{H_t-1}{n-1}\cdot\frac{\Y_t}{H_t}\cdot \frac{\ti{\X}_t}{H_t}\, ,
\end{eqnarray*}
and
\begin{eqnarray*}
p_{-1}^t&=& \frac{H_t}{n}\cdot\frac{n-H_t}{n-1}\cdot\frac{\hx-\ti{\X}_t}{n-H_t}\cdot \frac{n-\hx-\Y_t}{n-H_t} + \frac{H_t}{n}\cdot\frac{H_t-1}{n-1}\cdot\frac{\X_t}{H_t}\cdot \frac{\ti{\Y}_t}{H_t}\, \cdot 
\end{eqnarray*}
After computation, we get, on $\{t<\tau\}$,
\begin{equation}\label{eq:exp_cond_bD_t}
\EE\left[\bD_{t+1}-\bD_t\given Z_t, \ti{Z}_t\right]=-\frac{H_t\bD_t }{n(n-1)}\left(1+\frac{H_t-1}{H_t}\right)\leq  -\frac{\bD_t}{n^2} \left(2H_t-1\right)
\end{equation}
From \eqref{eq:exp_cond_bD_t}, it is not hard to see that the variable
\begin{eqnarray*}
M_t&=& \ind_{\{\tau>t\}}\bD_t\exp\left(\sum_{s=0}^{t-1}\frac{(2H_s-1)}{n^2}\right)
\end{eqnarray*}
is a super-martingale, which implies $\EE_{x,y}\left[M_t\right]\leq  \EE_{x,y}\left[\bD_0\right] \leq n$. 

Now let $\tau_\star=\inf\{t\geq 0,\,\ind_{\cB_t}\bD_t=0\}$. By \eqref{eq:bD_t-moves}, we see that, provided $\{\tau_\star>t\}$, the process $\{\ind_{\cB_t}\bD_t\}$ is a supermartingale ($p_{-1}^t\geq p_1^t$) and that there exists a constant $\sigma^2>0$ such that the conditional variance of its increments is larger than $\sigma^2$ (because on $\cB_t$, the probability to make a move $p_{-1}^t+p_1^t$ is larger than some absolute constant). By \citet[Proposition 17.20]{LePeWi09}, for all $u>0$ and $k\geq 0$, 
\begin{eqnarray}\label{eq:random-walk-2}
\PP_k(\tau_\star>u)&\leq & \frac{4k}{\sigma\sqrt{u}}\, \cdot
\end{eqnarray}

Now take $t=\frac{n\log n}{2}$ and $u=\alpha n$. We have
\begin{eqnarray*}
\PP_{x,y}(\tau>t+u)&\leq & \PP_{x,y}(\cB_{n^2}^c)+\PP_{x,y}(\tau_\star>t+u)\, .
\end{eqnarray*}
By \eqref{eq:good-hamming-2}, we know that $\PP_{x,y}(\cB_{n^2}^c)=o(1)$. Also, considering the event
\begin{eqnarray*}
\cA_{t-1}&=& \left\{\sum_{s=0}^{t-1}H_s\geq \frac{n^2\log n}{4}-\beta n^2\right\}\, ,
\end{eqnarray*}
and invoking~\eqref{eq:random-walk-2}, we get 
\begin{eqnarray*}
\PP_{x,y}\left(\tau_\star > t+u\right)&\leq & \EE_{x,y}\left[\ind_{\{\tau_\star >t\}}\PP_{Z_t,\ti{Z}_t} \left(\tau_\star > u\right)\right]\\
&\leq & \PP_{x,y}\left(\{\tau_\star>t\}\cap \cA_{t-1}^c\right) + \EE_{x,y}\left[\ind_{\cA_{t-1}}\ind_{\{\tau_\star>t\}} \frac{4\bD_t}{\sigma\sqrt{u}}\right]\, \cdot 
\end{eqnarray*}

On the one hand, recalling the notation and results of Section \ref{sec:hamming} (in particular equation \eqref{eq:exp_cd_t}), and applying Markov's Inequality, 
\begin{eqnarray*}
\PP_{x,y}\left(\{\tau_\star>t\}\cap\cA_{t-1}^c\right)&\leq & \PP_{x,y}\left(\sum_{s=0}^{t-1}\cD_s>\beta n^2\right)\\
&\leq &\frac{1}{\beta n^2}\sum_{s=0}^{t-1}\left(an\e^{-s/n}+b\right)\; =\; O\left(\frac{1}{\beta}\right)\, \cdot 
\end{eqnarray*}

On the other hand,
\begin{eqnarray*}
\EE_{x,y}\left[\ind_{\{\tau_\star>t\}}\ind_{\cA_{t-1}}\bD_t\right]
&\leq & \exp\left(-\frac{\log n}{2}+\frac{t}{n^2}+2\beta\right)\EE_{x,y}\left[M_t\right]\; =\; O\left(\e^{2\beta}\sqrt{n}\right)\, .
\end{eqnarray*}

In the end, we get
\begin{eqnarray*}
\PP_{x,y}\left(\tau> t+u\right)&=&O\left(\frac{1}{\beta}+\frac{\e^{2\beta}}{\sqrt{\alpha}}\right)\, \cdot 
\end{eqnarray*}

Taking for instance $\beta=\frac{1}{5}\log\alpha$ concludes the proof of Theorem \ref{thm:cutoff_p=2}.

\subsection*{Acknowledgements.}

We thank Persi Diaconis and Ohad Feldheim for helpful discussions and references. We thank Ryokichi Tanaka and Alex Lin Zhai for valuable comments and corrections, as well as Markus Heydenreich and his student Manuel Sommer for pointing out an issue in the previous version of equation~\eqref{eq:bound_mu_k}. We also thank Ron Rivest for suggesting this problem to us.

\bibliographystyle{abbrvnat}
\bibliography{biblio}

\end{document}